\newcommand{\C}{\mathbb{C}}
\newcommand{\cO}{\mathcal{O}}
\newcommand{\cV}{\mathcal{V}}
\newcommand{\cL}{\mathcal{L}}
\newcommand{\Pp}{\mathbb{P}^1}
\newcommand{\Pt}{\mathbb{P}^2}
\newcommand{\lang}{\left\langle}
\newcommand{\rang}{\right\rangle}
\newcommand{\cH}{\mathcal{H}}
\newcommand{\zee}{\mathfrak{z}}
\newcommand{\cM}{\mathcal{M}}
\DeclareMathOperator{\End}{End}
\DeclareMathOperator{\Hilb}{Hilb}
\DeclareMathOperator{\Ext}{Ext}
\DeclareMathOperator{\ch}{ch}
\DeclareMathOperator{\td}{td}
\DeclareMathOperator{\Gr}{Gr}
\DeclareMathOperator{\Tr}{Tr}
\DeclareMathOperator{\Iso}{Iso}
\newtheorem*{thma}{Theorem A}
\newtheorem{thm}{Theorem}
\begin{document}
\title{$K$-theory of moduli spaces of sheaves and large Grassmannians}
\author{Erik Carlsson}
\maketitle
\begin{abstract}
We prove a theorem classifying the equivariant $K$-theoretic pushforwards of the product of arbitrary Schur functors applied to the tautological
bundle on the moduli space of framed rank $r$ torsion-free sheaves on $\mathbb{P}^2$, and its dual. 
This is done by deriving a formula for similar coefficients on 
Grassmannian varieties, and by thinking of the moduli space as a class in the $K$-theory of the Grassmannian, in analogy with the
construction of the Hilbert scheme when the rank is one.
Our motivations stem from some vertex operator calculus studied recently by
Nekrasov, Okounkov, and the author when the rank is one, with applications to four-dimensional gauge theory.
\end{abstract}
\section{Introduction}
Let $\cM_{r,n}$ denote the moduli space of framed, rank-$r$ torsion-free sheaves on $\Pt$, together with a framing at the line at infinity,
with second Chern class $c_2=n$ described in detail in \cite{HL,Nak2}.
There is a second description of $\cM_{r,n}$ as a symplectic quotient of the space of ADHM data by the general linear group $GL_n$.
This moduli space is a smooth variety of complex dimension $2rn$, with a
tautological $n$-dimensional complex vector bundle whose fiber is given by
\begin{equation}
\label{cV}
\cV\big|_F = H^1_{\Pt}(F(-\Pp_\infty)),\quad F \in \cM_{r,n},
\end{equation}
where $\Pp_\infty$ is the line at infinity on $\Pt$.
This bundle can be defined either using the tautological sheaves on the moduli space, or by taking the fiber product with the standard
representation of $GL_n$ in the ADHM description. When the rank $r$ is one, $\cM_{r,n}$ is canonically isomorphic to the Hilbert scheme 
of points on $\C^2$ by identifying the subscheme with its ideal sheaf in projective space. In this case, $\cV$ is the bundle whose fiber over
a subscheme is the space of global sections of its ring of functions. Its determinant
\[\cL = \det(\cV)\]
is an important line bundle, which will be used later.

There is a well-known torus action of $G = T^2\times T^r$ on $\cM_{r,n}$, whose fixed-locus is a finite set,
which is proved by Nakajima in \cite{Nak4}.
The two-dimensional torus acts by pulling back sheaves via the action
\begin{equation}
\label{T2}
T^2 \circlearrowright \Pt,\quad (z_1,z_2)\cdot (x_1,x_2,x_3) = (z_1^{-1}x_1,z_2^{-1}x_2,x_3),
\end{equation}
preserving the line at infinity $x_3=0$. The $r$-dimensional torus, whose coordinates are labeled by $w_j$,
acts by composition on the framing, as the standard torus inside $GL_r$.
The moduli space is noncompact, but there is still a well-defined pushforward in equivariant $K$-theory,
\begin{equation}
\label{exp}
\lang E \rang_X = \sum_{i} (-1)^i \ch H^i_X(E),\quad E \in K_G(X).
\end{equation}
The moduli space is noncompact, but Nakajima proved in \cite{Nak4} that the sheaf cohomology groups of an equivariant bundle on $\cM_{r,n}$ have finite-dimensional $T$-eigenspaces, making the above expectations well defined. He 
showed furthermore that the localization formula combined with Riemann-Roch applies, and it calculates
its expectation as a rational function of $z_i,w_j$.
This is given in equation \eqref{modexpdef}, which could be taken as its definition.

For each $r,n$, we define an inner product on $\Lambda$, the space of symmetric functions in infinitely many variables, by
\begin{equation}
\label{modexp}
(s_\mu,s_\nu)_{\cM_{r,n},k} =\lang \mathbb{S}_\mu(\cV)^* \mathbb{S}_\nu(\cV) \cL^k \rang_{\cM_{r,n}}.
\end{equation}
Here $s_\mu$ are the Schur polynomials, and $\mathbb{S}_\mu$ are the Schur functors. 
If both $\mu,\nu$ are the empty partition, and $k=0$, this is the massless $K$-theoretic generalization of one of Nekrasov's partition functions $Z_n$ studied in \cite{Nak5}. A central application we have in mind is to a more general partition function of Nekrasov's which has a mass term, and to some recent results involving vertex operators acting on the equivariant $K$-theory groups of the Hilbert scheme studied by Nekrasov, Okounkov, and the author \cite{CNO}. See also \cite{IKS} for more on the $K$-theoretic partition function from the point of view of string theory.

Our main theorem classifies these coefficients:
\begin{thma}
Fix $f,g \in \Lambda$, and $r,n \geq 0$.
Then $(f,g)_{\cM_{r,n},k}$ is the unique element of $\C(z_i,w_j)$ satisfying
\begin{itemize}
\item[a.] There exists an element $F \in \C(z_i,w_j)[x_i,y_j]$ such that
\[(f,g)_{\cM_{r,n},k} = F(z_i^k,w_j^k)\]
for all $k$.
\item[b.] There exists $k_0$ such that for any $k \geq k_0$
\[(f,g)_{\cM_{r,n},k} = 
\Tr_\Lambda \varphi_{(1-M)p_1} m_f^* \Gamma_+(W)\pi_n m_{e_n^k} \Gamma_-(z_1z_2\overline{W}) m_g, \]
where $M = (1-z_1)(1-z_2)$ and $W = w_1 + \cdots + w_r$.
\end{itemize}
\end{thma}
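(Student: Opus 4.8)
The plan is to reduce everything to a localization computation and then recognize the resulting combinatorial sum as a vertex operator trace. First I would invoke Nakajima's theorem (cited in the excerpt) that the $G$-fixed locus of $\cM_{r,n}$ is finite and that localization together with Riemann--Roch computes the expectation $\lang\cdot\rang_{\cM_{r,n}}$. The fixed points are indexed by $r$-tuples of partitions $\vec\lambda=(\lambda^{(1)},\dots,\lambda^{(r)})$ with $|\vec\lambda|=n$, and at each such point the restrictions of $\cV$, of its Schur functors $\mathbb{S}_\mu(\cV)^*\mathbb{S}_\nu(\cV)$, of $\cL^k=\det(\cV)^k$, and of the tangent space all have explicit monomial characters in $z_1,z_2,w_1,\dots,w_r$ read off from the boxes of the diagrams. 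Thus $(f,g)_{\cM_{r,n},k}$ is a finite sum over $\vec\lambda$ of rational functions, each multiplied by the character of $\cL^k$ at $\vec\lambda$.

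Part (a) then follows almost immediately. Since $\cV|_{\vec\lambda}$ has character a sum of Laurent monomials, its determinant $\cL|_{\vec\lambda}$ is a single monomial $\prod_i z_i^{a_i}\prod_j w_j^{b_j}$, so the character of $\cL^k$ is obtained from it by the substitution $z_i\mapsto z_i^k$, $w_j\mapsto w_j^k$. Collecting the finitely many monomials appearing across all fixed points and absorbing the remaining $k$-independent rational factors into their coefficients exhibits the required $F\in\C(z_i,w_j)[x_i,y_j]$ with $(f,g)_{\cM_{r,n},k}=F(z_i^k,w_j^k)$.

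The substantive work is part (b), and here I would follow the Grassmannian strategy advertised in the abstract. In the rank-one case a point of the Hilbert scheme is a codimension-$n$ quotient of $\C[x_1,x_2]$, hence a point of a large Grassmannian on which $\cV$ is the restriction of the tautological quotient bundle; for general $r$ one works with $r$ copies, the framing torus $T^r$ acting through $W=w_1+\cdots+w_r$. The first task is to derive, purely on the Grassmannian, a closed formula for the analogue of $\lang\mathbb{S}_\mu(\cV)^*\mathbb{S}_\nu(\cV)\cL^k\rang$; the second is to realize $[\cM_{r,n}]$ as an explicit $K$-theory class on $\Gr$ via the ADHM complex, so that the pushforward over $\cM_{r,n}$ becomes a pushforward over $\Gr$ of this class against the ADHM factor built from $M=(1-z_1)(1-z_2)$. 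I would then transport the Grassmannian pushforward to the bosonic Fock space $\Lambda=\C[p_1,p_2,\dots]$, under which multiplication by $f=s_\mu$, by $g=s_\nu$, and their adjoints become $m_f^*$ and $m_g$; insertion of $\cL^k$ (the determinant, matching the top elementary symmetric function of an $n$-dimensional bundle) becomes $m_{e_n^k}$; the constraint $|\vec\lambda|=n$ becomes the graded projection $\pi_n$; the framing and ADHM data become the half-vertex operators $\Gamma_+(W)$ and $\Gamma_-(z_1z_2\overline W)$; and the Euler class of the tangent bundle becomes the central correction $\varphi_{(1-M)p_1}$. Evaluating the resulting operator as a trace over $\Lambda$ via the standard commutation relations for the $\Gamma_\pm$ reproduces the right-hand side of (b), valid once $k$ is large enough that the series implicit in the operators converge against $\pi_n$, which supplies the threshold $k_0$.

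Finally, uniqueness is a formal consequence: condition (b) fixes $(f,g)_{\cM_{r,n},k}$ for all $k\geq k_0$, condition (a) writes each such value as $F(z_i^k,w_j^k)$ for a fixed polynomial $F$, and since for generic $z_i,w_j$ the sequences $k\mapsto\big(\prod_i z_i^{a_i}\prod_j w_j^{b_j}\big)^k$ attached to distinct monomials are linearly independent as functions of $k$, the values for $k\geq k_0$ determine $F$, hence $(f,g)_{\cM_{r,n},k}$ for every $k$. I expect the main obstacle to be the trace and normal-ordering step of part (b): matching the monomial weights coming from the boxes of $\vec\lambda$ to the commutation relations among $\Gamma_+(W)$, $\Gamma_-(z_1z_2\overline W)$ and the multiplication operators, and pinning down the central term $\varphi_{(1-M)p_1}$ precisely enough that the anomaly produced by commuting the vertex operators past one another exactly reproduces the Euler class of the tangent space at each fixed point.
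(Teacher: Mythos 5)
Your outline points in the right general direction --- localization for part (a), a large Grassmannian for part (b), uniqueness from linear independence of the sequences $k\mapsto c^k$ --- but it is missing the mechanism that actually makes part (b) work, and it misattributes the origin of the factor $\varphi_{(1-M)p_1}$. The paper does not literally realize $[\cM_{r,n}]$ as a $K$-theory class on the Grassmannian and push forward; that picture is an informal gloss from the introduction. What it actually does is fix $|z_i|<1$, truncate $R^{\oplus r}$ to a finite-dimensional $X$ of size $N$, and sum over \emph{all} torus-invariant codimension-$n$ subspaces $V\subset X$ the quantity $f(\overline{V}')g(V')\det(V')^k\,\Omega(\overline{E}_0+\overline{E}_1+v\overline{E}_2)$, where $E_0$ is the Grassmannian tangent character and $E_1,E_2$ are corrections whose sum limits to the moduli tangent character exactly when $V=X\cap V_{\tilde{\mu}}$; for every other $V$ the constant term of $E(V)$ is negative, so $\Omega(E(V))=0$ and the term dies at $v=1$. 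This gives $F(1)=(f,g)_{\cM_{r,n},k}$. The auxiliary variable $v$ is essential: the Taylor coefficients of $F(v)$ at $v=0$ are honest Grassmannian expectations via the Cauchy expansion of $\Omega(v\overline{E}_2)$ --- which is where the sum over $\mu$ producing the trace $\Tr\varphi_{(1-M)p_1}m_f^*\cdots$ comes from, \emph{not} from a normal-ordering anomaly of the $\Gamma_\pm$ --- Theorem 1 of the paper evaluates them as $N\to\infty$, the interchange of limits is only valid near $v=0$ because $F_N$ has poles inside the unit disc, and one recovers $v=1$ by analytic continuation, using that the resulting operator expression converges on a disc of radius $1+\epsilon$. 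The hypothesis $k\geq k_0$ is what makes $F(v)$ converge on that larger disc (the monomial $\det(V')^k$ damps the infinitely many spurious $V$'s as $N\to\infty$); it is not about series in the operators converging against $\pi_n$.

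Without the deformation parameter and the analytic continuation, your plan has no way to pass from expectations over the Grassmannian to expectations over the moduli space: the moduli space sits in the Grassmannian in positive codimension, and a direct identification of pushforwards would require exactly the $K$-theoretic class you gesture at but do not construct. Separately, the closed Grassmannian formula you defer is itself the paper's Theorem 1, proved by a nontrivial determinant expansion using Shaun Martin's theorem and the decomposition $\lambda=\lambda'+\lambda''$ of the pushforward functional on $\mathbb{P}(X)$; it cannot be taken for granted. Your treatment of part (a) and of uniqueness is fine and matches the paper's (essentially unstated) reasoning.
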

\noindent
Here $\varphi_{(1-M)p_1}$ is a homomorphism acting diagonally in the power-sum basis $p_\mu$,
the $\Gamma_{\pm}$ are vertex operators acting on $\Lambda$ by a homomorphism and a multiplication operator respectively, $\pi_n$ is a projection map
onto the subspace of finitely many variables, and $m_f$ and $m_f^*$ are the multiplication map by $f$ and its dual under the Hall inner product. These are defined in section \ref{prelim}.

The first condition and the uniqueness statement are obvious from definitions, but are important. 
The intended application of the theorem would be to extend formulas that exist when
the rank is one, including many that are based on Haiman's theory \cite{H1,H2}, to higher rank. To establish a formula for this inner product for all $k$, especially the case when it equals zero, one only needs to show that it satisfies the two conditions in the theorem, which amounts to studying the large $k$ behavior. Furthermore, it is a necessary constraint that $k$ be large. When the rank is zero, the left hand side is automatically zero, but the vertex operator expression only vanishes for large $k$, for degree reasons.
In fact, the right-hand-side of the theorem is actually always a holomorphic function of $z_i$ at the origin, whereas
the left-hand-side is a meromorphic function with singularities at $z_i=0$, unless $k$ is large enough to cancel them.
\subsection{} The motivations for this theorem start with some remarkable properties of certain integrals over $\Hilb_n \C^2$,
and sometimes $\cM_{r,n}$, which are of mathematical and physical interest. A central example is the aforementioned partition function of Nekrasov with a mass parameter $m$,
defined mathematically as a generating function
\[Z = \sum_n q^n \int_{\cM_{r,n}} e(T_m),\]
where $T_m$ is the tangent bundle, but endowed with an additional action of a one-dimensional torus, with Lie parameter $m \in \mathfrak{t}$.
When the rank is one, and the group action \eqref{T2} is replaced by the simpler action
\begin{equation}
\label{CYaction}
z \cdot (x_1,x_2,x_3) = (zx_1,z^{-1}x_2,x_3),
\end{equation}
the partition function turns out to be a power of the eta function times a power of the generating variable $q$ \cite{NO}. 
See \cite{N,O} for a readable introduction to this topic, and the gauge theoretic background behind the partition function.
Underlying these properties are actions of structures that
related to conformal field theory (Heisenberg, Ka\c{c}-Moody algebras, vertex operators, the Virasoro algebra, double affine Hecke algebras, and other generalizations),
on the cohomology groups of the Hilbert scheme. This is a quite large subject, see \cite{Ba,CNO,CO,Groj,L2,LQW,Lic,MO,Nak1,Nak6,OP,SV},
to name a few.

Theorem A stemmed from an attempt to generalize
a vertex operator that has been studied recently by Okounkov, Nekrasov, and the author. 
In \cite{NO}, Okounkov and Nekrasov used a well-known vertex operator in their study of the partition function $Z$ and Seiberg-Witten theory.
In \cite{CO}, Okounkov and the author gave it a geometric definition, and
extended to an operator
\[W(\cL) : \cH \rightarrow \cH,\quad \cH = \bigoplus_n H^*(\Hilb_n S)\]
on the cohomology groups the Hilbert scheme of a general surface $S$, with a line bundle $\cL$.
When $\cL$ is the trivial bundle endowed with a nontrivial torus action, the weight $m$ of the action plays the role of the mass parameter mentioned above. We defined this operator as a characteristic class of a certain vector bundle 
on a pair of Hilbert schemes built from $\Ext$-groups of the canonical sheaves. We then proved a formula in terms of Nakajima's Heisenberg operators
mentioned above, which is the sense in which $W$ is a vertex operator. Most recently, Nekrasov, Okounkov and the author defined a $K$-theoretic version of this operator, which we call $\tilde{W}$, which limits to $W$ to low order in the equivariant parameters, by taking Chern characters \cite{CNO}.
The main theorem is that it equals a certain vertex operator under the Bridgeland, King, and Reid isomorphism, generalizing the previous formula.
The essential machinery was Haiman's character theory of this isomorphism, combined with some insight about how to decompose $\tilde{W}$, and the theory of the Fourier transform in symmetric functions.

In a typical application of this operator, the trace of $W$ against another well-chosen operator $c$, transparently calculates a quantity of interest.
But because of the description of $W$ as a vertex operator, the resulting formulas are often tractable and intriguing,
and are sometimes characterized by properties such as modularity.
For instance, the trace of $W$ against the operator of multiplication by $q^n$ on the $\Hilb_n \C^2$ component of $\cH$ is the
partition function with a mass term, which turns out to be a modular form up to a power of $q$ if the Calabi-Yau action \eqref{CYaction} is used \cite{NO}.
When $c$ is the operator of the cup-product by characteristic classes of $\cV$, one obtains formulas for the analogous correlation functions as coefficients
of theta functions, which are quasi-modular forms \cite{C2,KZ}.
When the surface varies, other generalizations of modularity appear. Using the full $W(\cL)$ on a general surface and letting $c$ equal a Calogero operator of Manfred Lehn's
produces the Eisenstein series $E_3$ \cite{CO}.
When $c$ is a certain charge operator coming from various other affine Lie algebras, one can extract properties of the dual partition function in higher rank,
as Okounkov and Nekrasov did in \cite{NO}.
In some cases, vector-valued modularity has appeared \cite{Ct}.
A common aspect of these calculations and the other operators mentioned in the
first paragraph is that on obtains a function of $q$ with intriguing properties,
and one obtains new cohomological formulas by extracting its Taylor coefficients.

The $K$-theoretic derivation of the vertex operator in \cite{CNO} is the most meaningful and general, but it is not yet clear how to extend it to higher rank.
As we mentioned, an essential tool is Haiman's character theory of the BKR isomorphism, which is a pushforward and a pullback over the
singular fiber product
\[\Iso_n = \Hilb_n \C^2 \times_{\C^{2n}/S_n} \C^{2n},\]
called the isospectral Hilbert scheme, see \cite{H1,H2}.
The map from $\Hilb_n \C^2$ to $\C^{2n}/S_n$ is the Hilbert-Chow map, which only makes sense in rank $1$.
Since Nekrasov's partition function has to do with the ADHM or commuting operator description of $\cM_{r,n}$ \cite{N},
it is desirable to produce an operator which is not based on the BKR isomorphism or
any other sophisticated black box from algebraic geometry, even in rank one.

In this paper, we abandon the search for an operator on $K_T$ which is a vertex operator under the BKR isomorphism, and instead study the
inner product \eqref{modexp}. By the Cauchy expansion \eqref{cauchy}, which is a part of the general reasoning of \cite{CNO}, this inner product also recovers the $K$-theoretic version of the partition function with the mass parameter,
\[\Tr A \Gamma_n u^d = Z_n(z_1,z_2,u),\]
where $Z_n(z_1,z_2,u)$ is the $K$-theoretic generalization of the partition function as in \cite{Nak5}, but with the aforementioned mass parameter $u = e^m$, $d$ is the degree operator,
$\Gamma_n$ is the operator corresponding to \eqref{modexp},
\[(\Gamma_n f,g) = (f,g)_{\cM_{r,n},0},\]
and $A$ is a simple expression in some of the vertex operators described in section \ref{prelim}.
This function limits to the original cohomological partition function $Z$ by
\[\sum_n q^n \lim_{t \rightarrow 0} Z_n(e^{t\epsilon_1},e^{t\epsilon_2},e^{tm}) = Z.\]

The proof of theorem A is completely different from the ideas in previous papers.
We first assume that the equivariant parameters $z_i$ take fixed values with norm less than $1$, and introduce an auxiliary meromorphic
function of one complex variable,
\[F(v) = \lim_{N \rightarrow \infty} F_N(v).\]
When $k$ is large enough, $F$ converges in a disc of radius $1+\epsilon$ about the origin, and its value at $v=1$ is the quantity of interest,
\[F(1) = (f,g)_{\cM_{r,n},k}.\]
The Taylor coefficients of $F_N$ about $v=0$ are given by certain expectations similar to \eqref{modexp} but with $\cM_{r,n}$ replaced by a certain
Grassmannian variety of size depending on $N$.
In section \ref{grasssection}, we prove a theorem that determines coefficients in terms of vertex operators when $N$ is large.
We may switch limits and obtain an exact answer for $F(v)$ when the norm of $v$ is small, but not when it equals one,
because $F_N$ has poles inside the unit disc. However, the answer turns out to be a meromorphic function
which equals the right-hand-side of theorem A
when $v=1$, and agrees with $F(v)$ near the origin. The theorem then follows by analytic continuation.

When the rank is one, there is a reasonable explanation for these constructions.
The Hilbert scheme of projective space is constructed as a subvariety of a large Grassmannian variety of subspaces of the graded ring
in a high degree.
In a similar way, the Hilbert scheme of $\C^2$ sits in the Grassmannian of subspaces of $R = \C[x,y]$ cut off in high degree, determined by $N$, and the vector bundle $\cV$ on $\Hilb_n \C^2$ is just the pullback of $V'$, the complement of the tautological bundle.
One can then think of $F(v)$ a approximating the expectation of Schur functors applied to $V'$ and its dual
against something close to a fundamental class of the Hilbert scheme, by the projection formula.
This is a reasonably down-to-earth idea, but it is unusual to make practical use of the construction of the moduli space in a calculation.
More often, the construction establishes the existence, but computations rely only on the moduli functor and deformation theory.

\emph{Acknowledgements.} The author would like to thank Ezra Getzler, Nikita Nekrasov, and Andrei Okounkov for many valuable discussions. Getzler first introduced the author to Shaun Martin's theorem.

\section{Preliminaries}
\label{prelim}
We need some preliminaries about symmetric functions. Everything in this section is standard, so we omit the details and proofs. We refer to \cite{Mac} for the background and notation on symmetric functions, to \cite{BO,FBZ,K,PS} for the infinite-wedge representation and the boson-fermion correspondence, and to \cite{H2} for the plethystic notation.

Let
\[\Lambda = \C[p_1,p_2,...],\quad p_k = \sum_i p_i^k\]
be the ring of symmetric polynomials in infinitely many variables. There is a projection map
\[\pi_n : \Lambda \rightarrow \Lambda,\quad \pi_n(s_\mu) = \left\{\begin{array}{cc} s_\mu & \mbox{$\ell(\mu) \leq n$} \\ 0 & \mbox{otherwise}\end{array}\right.\]
onto the symmetric functions of finitely many variables under the Hall inner product
\begin{equation}
\label{hall}
(p_\mu,p_\nu) = \delta_{\mu,\nu} \zee(\mu),
\end{equation}
in which the Schur polynomials $s_\mu$ are orthonormal. Here $\ell(\mu)$ denotes the length of the partition $\mu$.
The inner product
\[ (f,\pi_n g) = [x_i^0] f(x_i)g(x_i^{-1})\Delta_n \overline{\Delta}_n,\quad \Delta_n = \prod_{1 \leq i<j\leq n} (x_i-x_j)\]
agrees with the Hall inner product in finitely many variables, denoted in Macdonald's book by $(f,g)_n$.

Any homomorphism on $\Lambda$ may be prescribed by evaluating the generators $p_k$ at a value.
If $f$ is a function of some indeterminants, let $\varphi_f$ be the homomorphism
\[\varphi_f : p_k \mapsto f_k,\]
where $f_k$ is the plethystic evaluation of $f$ at $x=x^k$ for every variable $x$ appearing in $f$.
This definition implicitly depends on a fixed set of chosen variables,
but for our purposes we can simply include all the variables in the paper.
In this notation, the homomorphism of evaluation at finitely many variables would be described by
\[\varphi_{x_1+\cdots + x_n} g = g(x_1,...,x_n).\]
$f$ may even contain the variables $p_i$, in which case
$f_k$ denotes the evaluation at $p_i = p_{ik}$, which is compatible with the above evaluation.
Another example is
\[\varphi_{-p_1} f = (-1)^{\deg(f)} \omega f,\]
where $\omega$ is the Macdonald involution. 
Let us also define a conjugation by
\[\overline{f} = f\big|_{x=x^{-1}}\]
for each variable $x$.
In this paper we will often denote a torus representation and its character by the same letter, so that
\[ \varphi_{X} s_\mu = \varphi_{\ch X} s_\mu =\ch \mathbb{S}_\mu(X).\]
See Haiman \cite{H2} for more on this \emph{plethystic} notation, which is useful in describing the Frobenius
characters of $S_n$-equivariant modules over rings.

Next we recall the \emph{infinite-wedge representation}. Let $\Lambda^{\infty/2} = \bigoplus_c \Lambda^{\infty/2}_c$, where
\[\Lambda_c^{\infty/2} = \bigoplus_{\mu}\C\cdot v_{\mu,c},\quad v_{\mu,c} = v_{\mu_1+c}\wedge v_{\mu_2-1+c} \wedge v_{\mu_3-2+c} \wedge \cdots.\]
It defines a projective representation 
\[\rho : GL_\infty \rightarrow GL(\Lambda^{\infty/2}_c)\]
of the infinite general linear group on the vector space with basis $v_i$, indexed by the integers.
There are many possible definitions of $GL_\infty$
with different convergence conditions on the matrix elements. In this paper, we are content with matrices such that $x_{ij}$
vanishes whenever $i-j$ exceeds some number.

The action is the easiest to describe on the Lie algebra level. Define the usual wedging and contracting operators by
\[\psi_i : \Lambda^{\infty/2}_c \rightarrow \Lambda^{\infty/2}_{c+1},\quad\psi_i(v_{\mu,c}) = v_i \wedge v_{\mu,c},\]
and let $\psi^*_i$ be the dual operator under the inner product on $\Lambda^{\infty/2}_c$ in which $v_{\mu,c}$ are orthonormal.
If $X_{ij}$ is an infinite matrix in the Lie algebra such that $X_{ij} = 0$ whenever $i-j$ exceeds some number, define the action by
\[\rho'(X) = \sum_{i,j} X_{ij} :\psi_i\psi_j^*:\]
where
\[ :\psi_i \psi_j^*: = \left\{\begin{array}{rl} \psi_i \psi_j^* & \mbox{$i \neq j$ or $i=j>0$} \\ -\psi_j^*\psi_i & \mbox{otherwise.}\end{array}\right.\]
On the group level, the matrix elements are
\begin{equation}
\label{rho}
\rho(x)_{\mu,\nu} = \lim_{N \rightarrow \infty}
\frac{\det\left(x_{\mu_i-i+c,\nu_j-j+c}\right)_{0\leq i,j \leq N}}{\det\left(x_{-i+c,-j+c}\right)_{0\leq i,j\leq N}}.
\end{equation}
See Ka\c{c} \cite{K} for more on this representation on the Lie algebra side, and Pressley and Segal \cite{PS} for the group side.

An infinite matrix of the form $x_{ij} = x_{j-i}$ induces an operator that commutes with the translation isomorphism
\[Q : \Lambda^{\infty/2}_c \rightarrow \Lambda^{\infty/2}_{c+1},\quad Qv_{\mu,c} = v_{\mu,c+1}.\]
For instance, the usual Heisenberg operators on this space are just the matrices with infinite off-diagonal strips,
\[\alpha_n = \sum_i \psi_{i} \psi^*_{i+n},\quad n \neq 0\]
and $\alpha_0$ is the operator of multiplication by $c$ on $\Lambda^{\infty/2}_c$.
It is easy to see that they satisfy the commutation relations
\begin{equation}
[\alpha_i,\alpha_j] = i\delta_{ij}.
\end{equation}
On the group level, we have the following vertex operators,
\begin{equation}
\label{gpm}
\Gamma_{\pm}(f) = \rho(g_{\pm}(f)),\quad g_{\pm}(f)_{ij} = \varphi_{f}h_{\pm(i-j)}
\end{equation}
where $h_k$ are the complete elementary symmetric polynomials for $k \geq 0$ and are zero for $k<0$.
$\Gamma_-(f)$ is not quite an operator for any particular values of the variables in $f$ because we defined $\Lambda^{\infty/2}_c$
as a direct sum, and instead should be regarded as an operator-valued distribution on the variables, when this is possible, see \cite{FBZ}. 
Using the exponential map,
\begin{equation}
\label{exp}
\Gamma_\pm(f) = \exp\left(\sum_{k\geq1} \frac{f_k}{k}\alpha_{\pm k}\right).
\end{equation}

The \emph{boson-fermion correspondence} is an isomorphism between $\Lambda_c^{\infty/2}$
and the irreducible lowest-weight representation of the Hiesenberg algebra, determined
by expressing the fermionic operators $\psi_i,\psi_i^*$ in terms of $\Gamma_\pm$. It says
\begin{equation}
\label{bf}
\psi_i = \tilde{\psi}_{i-c}Q,\quad \psi_i^* = Q^{-1}\tilde{\psi}^*_{i-c},
\end{equation}
where
\begin{equation}
\label{bf2}
\tilde{\psi}_i = [z^i] \Gamma_-(z)\Gamma_+^{-1}(z^{-1}),\quad\tilde{\psi}^*_i = [z^{-i}] \Gamma^{-1}_-(z)\Gamma_+(z^{-1}),
\end{equation}
and the coefficients of $z$ are taken as elements of $\End(\Lambda^{\infty/2}_c)((z))$. See Ka\c{c} \cite{K} for
more details, or Frenkel and Ben-Zvi \cite{FBZ} for a stronger version of this statement in the context of vertex algebras.

Consider the isomorphism of inner product spaces
\[\Phi_c : \Lambda \rightarrow \Lambda^{\infty/2}_c,\quad s_\mu \mapsto v_{\mu,c},\]
which is also often referred to as the boson-fermion correspondence.
It comes from multiplying $s_\mu$ by the antisymmetric polynomial
\[\Delta_m = \prod_{1 \leq i<j\leq m} (x_i-x_j),\]
and normalizing by the appropriate power of $x_1\cdots x_m$
depending on $c$, as the number of variables approaches infinity.
Under this map, the Heisenberg operators map to
\[\Phi^{-1}_c\alpha_n\Phi_c f= \left\{\begin{array}{rl} p_{-n} f & n<0 \\ n \partial_{p_n} f& n>0 \end{array}\right.\]
which is of course consistent with the above Heisenberg relations. The vertex operators are even more interesting. By the above identity and \eqref{exp},
\begin{equation}
\label{PhicGamma}
\Phi_c^{-1}\Gamma_+(f) \Phi_c = \varphi_{p_1+f},\quad 
\Phi_c^{-1}\Gamma_-(f)\Phi_c g = \Omega(p_1f)g,
\end{equation}
where
\begin{equation}
\label{Omega}
\Omega(f) = \exp\left(\sum_{k\geq 1} \frac{f_k}{k}\right) = \sum_\mu \frac{\varphi_{f} p_\mu}{\zee(\mu)},
\end{equation}
assuming this series converges. The resulting rational function also makes sense if $f$ is a Laurent polynomial with integer coefficients,
even if the above summation does not actually converge for any particular value of the variables.
Notice that
\[\Omega(f+g) = \Omega(f)\Omega(g).\]
It follows immediately from \eqref{PhicGamma} that $\Gamma_\pm$ satisfy the commutation relations
\begin{equation}
\label{GammaCR}
\left[\Gamma_\pm(f),\Gamma_\pm(g)\right] =0,\quad \Gamma_+(f) \Gamma_-(g) = \Omega(fg) \Gamma_-(g) \Gamma_+(f).
\end{equation}
In this paper, we will often denote operators and their conjugate by $\Phi_c$ by the same name.

The projection operator $\pi_n$ also has an interesting expression under $\Phi_c$. The smallest index $i$ such that $v_i$ is not a term in $v_{\mu,c}$ is $-\ell(\mu)+c+1$. 
Macdonald's involution $\omega$ swaps $s_\mu$ with $s_{\mu'}$, where $\mu'$ is the transpose of $\mu$, and the indices $i$ in $v_{\mu',c}$
are the negatives of the indices missing from $v_{\mu,-c-1}$. It follows that 
\begin{equation}
\label{bfpi}
\Phi_c^{-1} \omega \pi_n \omega \Phi_c v_{\mu,c} = \left\{\begin{array}{rl} v_{\mu,c} & \mbox{if $\mu_1+c \leq n$} \\ 0 & \mbox{otherwise.}\end{array}\right.
\end{equation}
In fact, this operator equals $\rho(D_{n+c})$, where
\[D_{a,ij} = \left\{\begin{array}{rl} 1 & \mbox{if $i=j \leq a$} \\ 0 & \mbox{otherwise.}\end{array}\right.\]
This is obviously not an invertible matrix, but equation \eqref{rho} still makes perfect sense as long as $a \geq c$.

\section{Grassmannians}\label{grasssection} Let $X$ be a complex representation of a torus $T$, let $N$ be its dimension, and
let $\Gr_m=\Gr_m(X)$ denote the Grassmannian of complex $m$-dimensional subspaces of $X$. It has a tautological
$m$-dimensional bundle $V$, and a complementary bundle
\[V' = X/V,\quad \dim(V') = n = N-m,\]
which are equivariant under the induced action of $T$.

Consider the following inner product on $\Lambda$, 
\begin{equation}
\label{G}
(s_\mu,s_\nu)_{\Gr_m} = \langle \mathbb{S}_\mu(V')^* \mathbb{S}_\nu(V) \rangle_{\Gr_m}, 
\end{equation}
where
\[\lang E \rang_Y = \sum_i (-1)^i \ch H^i_{Y}(E) = \int_{Y}\ch( E) \td \]
is the Euler characteristic of an equivariant bundle $E$, and the second equality is the Riemann-Roch formula.
If $X$ has no multiple weights as a $T$-representation, then the fixed points of the Grassmannian are discrete, and \eqref{G} is determined
by the Atiyah-Bott localization theorem,
\begin{equation}
(f,g)_{\Gr_m} = \sum_{V \in \Gr_m^T} f(\overline{V'})g\left(V\right)\Omega(\overline{V'}V).
\label{Grloc}
\end{equation}
where $f(V)$ is shorthand for $\varphi_V f$, we are using $V$ to denote both the vector space and its character as a $T$-representation.

Our first theorem calculates these coefficients for large $m$, with no restrictions on $n$. The motivation for such a formula is that the Hilbert scheme of $n$ points is constructed as a subvariety of a Grassmannian of codimension $n$ subspaces of a large space.
\begin{thm}
\label{grassthm} If $f=s_\mu$ with $\ell(\mu) \leq m$, then
\[(f,g)_{\Gr_m} = \left(f, \Gamma_+(X)\pi_n \Gamma_+^{-1}(X)\varphi_{-p_1} \pi_m g\right). \]
%
%
\end{thm}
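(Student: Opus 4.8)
\emph{Reduction of the right-hand side.} By bilinearity in $g$ it suffices to treat $g=s_\nu$, and my first move is to unwind the right-hand side entirely inside $\Lambda$. By \eqref{PhicGamma} the operator $\Gamma_+(f)$ acts as the homomorphism $\varphi_{p_1+f}$, and from the exponential description together with $[\alpha_i,\alpha_j]=0$ for $i,j>0$ one has $\Gamma_+^{-1}(X)=\Gamma_+(-X)$ and the Hall-adjoint $\Gamma_+(f)^\dagger=\Gamma_-(f)$. The latter gives the skew-Schur matrix elements $(s_\kappa,\Gamma_+(f)s_\lambda)=s_{\lambda/\kappa}(f)$. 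Using in addition $\varphi_{-p_1}s_\nu=(-1)^{|\nu|}s_{\nu'}$ and the fact that $\pi_m,\pi_n$ are the coordinate projections onto $\{\ell\le m\}$, $\{\ell\le n\}$, I would apply the operators to $s_\nu$ from the right, extract the coefficient of $s_\mu$, and eliminate the $-X$ via $s_{\sigma/\tau}(-X)=(-1)^{|\sigma/\tau|}s_{\sigma'/\tau'}(X)$. This shows the right-hand side vanishes unless $\ell(\nu)\le m$, and otherwise equals
\[ \sum_{\lambda:\lambda_1\le n}(-1)^{|\nu|-|\lambda|}\,s_{\nu/\lambda}(X)\,s_{\lambda'/\mu}(X). \]
This explicit $\Omega$-free skew-Schur sum is the concrete target.

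\emph{The geometric side.} On the left I would invoke the localization formula \eqref{Grloc}. The $T$-fixed points of $\Gr_m(X)$ are the coordinate subspaces indexed by $m$-element subsets $S$ of the weights of $X$, with $V_S=\sum_{j\in S}x_j$ and $V'_S=X-V_S$, equivalently by partitions inside the $m\times n$ box. This presents $(s_\mu,s_\nu)_{\Gr_m}$ as $\sum_S s_\mu(\overline{V'_S})\,s_\nu(V_S)\,\Omega(\overline{V'_S}V_S)$, where $\Omega(\overline{V'_S}V_S)=\prod_{j\in S,\,\ell\notin S}(1-x_j/x_\ell)^{-1}$ is the inverse $K$-theoretic Euler class of the tangent space. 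The whole theorem thus becomes the identity between this $\Omega$-weighted fixed-point sum and the $\Omega$-free sum above.

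\emph{The bridge, and the main obstacle.} The conceptual heart, and the step I expect to be hardest, is matching the two sums, which I would do by realizing both as one matrix element in the semi-infinite wedge. Under $\Phi_c$ the target operator $\Gamma_+(X)\pi_n\Gamma_+^{-1}(X)\varphi_{-p_1}\pi_m$ is the image under $\rho$ of a product of banded matrices: $g_+(X)$ and $g_+(-X)$ for the vertex operators, the diagonal projections $D_a$ of \eqref{bfpi} for $\pi_n,\pi_m$, and a reflection for $\varphi_{-p_1}=(-1)^{\deg}\omega$. Formula \eqref{rho} then expresses the matrix element between $v_{\mu,c}$ and $v_{\nu,c}$ as a determinant of this product, and Cauchy--Binet expands it as a sum over intermediate index sets confined by the $D_a$'s to a finite range. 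I expect each such index set to be exactly a fixed point $S$: the minors of $g_+(\pm X)$ become the Schur factors $s_\nu(V_S)$ and $s_\mu(\overline{V'_S})$ by Jacobi--Trudi, the reflection supplies the bar and the passage from $V$ to $V'$, the $D_a$-truncation encodes the box constraint $\lambda_1\le n$ (the difference between the finite Grassmannian and its large-$m$ limit), and—most delicately—the normalization in \eqref{rho} must reproduce exactly the localization weight $\Omega(\overline{V'_S}V_S)$, so that the $\Omega$-weighted sum collapses onto the $\Omega$-free vertex-operator expression. Carrying out this bookkeeping, and in particular tracking the charge $c$ through the reflection, is where the real work lies.

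\emph{Role of the hypothesis and a cross-check.} The constraint $\ell(\mu)\le m$ is essential: it is precisely the condition under which \eqref{rho} is valid in the charge-$m$ sector, matching the remark that $\rho(D_a)$ makes sense only for $a\ge c$. One sees by a small example (e.g.\ $\Gr_1(\C^3)$ with $\mu=(1,1)$, $\nu=(2)$) that the bosonic expression of the first paragraph ceases to equal the geometric left-hand side once $\ell(\mu)>m$, so the hypothesis cannot be dropped. Throughout, a convenient consistency check is Bott's theorem: both sides compute $\chi(\Gr_m,\mathbb{S}_\mu(V')^*\otimes\mathbb{S}_\nu(V))$, a single signed Schur function $\pm s_\gamma(X)$ or zero, and one can confirm directly that the skew-Schur sum collapses to the same value—for instance, when the box constraint $\lambda_1\le n$ is inactive the sum telescopes through $s_\nu(A+B)=\sum_\lambda s_{\nu/\lambda}(A)s_\lambda(B)$ with $A+B=X-X=0$.
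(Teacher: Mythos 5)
The central step of your argument is missing, and the mechanism you sketch for it would not work as described. You correctly reduce the right-hand side to a polynomial identity (a skew-Schur sum over partitions in a box) and correctly write the left-hand side as the localization sum \eqref{Grloc}, but the entire content of the theorem is the equality of these two expressions: a sum of rational functions with denominators $\prod(1-x_j/x_\ell)$ against a manifestly polynomial expression. Your proposed bridge is to expand the vertex-operator side by Cauchy--Binet applied to \eqref{rho} and to match the resulting terms with fixed points. This cannot work termwise: the minors of the banded Toeplitz matrices $g_+(\pm X)$ are skew Schur functions of the \emph{full} alphabet $X$ (Jacobi--Trudi), not Schur functions $s_\nu(V_S)$ of the sub-alphabet attached to a fixed point $S$; and since $g_+(\pm X)$ is unipotent upper-triangular and the remaining factors are diagonal, the normalizing determinant in \eqref{rho} equals $1$ and cannot produce the rational weights $\Omega(\overline{V'_S}V_S)$. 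The two sums agree only in aggregate, and the step that converts the rational localization data into polynomial data is exactly the step you defer as ``where the real work lies.''

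The paper takes a genuinely different route precisely to handle this point. It first applies Shaun Martin's abelianization theorem to rewrite $(f,g)_{\Gr_m}$ as a pairing $\left(\varphi_{-p_1}\Gamma_+(\overline{X})f,\,g\right)_{\lambda,m}$ on symmetric functions in $m$ variables, governed by the linear functional $\lambda_i=\chi(\mathbb{P}(X),\mathcal{O}(-i))$; the explicit cohomology of line bundles on projective space then gives $\lambda=\lambda'+\lambda''$ with polynomial values ($h_{-i}(\overline{X})$ and $h_{i-N}(X)\det(X)$ up to sign), and this is where the denominators disappear. The $\lambda'$ part yields $\left(\Gamma_+(\overline{X})s_\mu,s_\nu\right)$ by Jacobi--Trudi, and a determinant expansion in the infinite wedge identifies the $\lambda''$ correction with the conjugated cutoff $\Gamma_+^{-1}(X)\,\rho(D_n)\,\Gamma_+(X)$. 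If you wish to avoid Martin's theorem you still need some substitute (Weyl-character or Lagrange-interpolation identities, or Borel--Weil--Bott as in your cross-check) to collapse the fixed-point sum; none is supplied. As a secondary point, your explicit formula for the right-hand side carries a sign error: the correct sign is $(-1)^{|\lambda|}$, not $(-1)^{|\nu|-|\lambda|}$ (test $\Gr_2(\C^2)$ with $\mu=\emptyset$, $\nu=(1)$, $n=0$, where the answer must be $+p_1(X)$), which suggests the reduction was not checked against a base case.
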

\emph{Remark}.
When $g=1$, this formula becomes
\[\lang f(V^*) \rang_{\Gr_m} = (\varphi_{\overline{X}-p_1} f,1)_{\Gr_m} = f(\overline{X}),\]
which was proved by Edidin and Francisco \cite{EF}, using Borel-Weil theory.
\begin{proof}
We may assume that $m\geq \ell(\nu)$, because otherwise both sides obviously vanish. Using 
\[f(\overline{V'}) = \left(\varphi_{-p_1}\Gamma_+(\overline{X}) f\right)(\overline{V}),\]
and Shaun Martin's theorem \cite{Mar} for the Grassmannian as a symplectic quotient of $U_m$,
%
%
\begin{equation}
\label{grassexp}
( f,g)_{\Gr_m} = \left(\varphi_{-p_1} \Gamma_+(\overline{X})f,g\right)_{\lambda,m},
\end{equation}
where
\begin{equation}
\label{detsp}
\left(f,g\right)_{\lambda,m} = \left(\lambda \boxtimes \cdots \boxtimes \lambda \right) \Delta_m \overline{\Delta}_m f(x_1^{-1},...,x_m^{-1})g(x_1,...,x_m),
\end{equation}
and $\lambda : \C(x) \rightarrow \C$ is the linear functional
\[\lambda(x^i) = \lambda_i = \int_{\mathbb{P}(X)} \ch \cO(-i) \td.\]
If $X$ has distinct weights given by $v_i$, then
%
%
\[\lambda(f) = \sum_i f(v_i) \Omega\left(\sum_{j \neq i} v_iv_j^{-1}\right)\]
and in fact equating \eqref{Grloc} and \eqref{detsp} just amounts to grouping terms.

From, for instance, theorem 5.1 from Hartshorne \cite{Ha}, $\lambda = \lambda'+\lambda''$, where
\[\lambda'_i = \left\{\begin{array}{cc} 
h_{-i}(\overline{X}) & i \leq 0 \\
0 & \mbox{otherwise} \end{array}\right.\]
and
\[\lambda''_i = \left\{\begin{array}{cc}(-1)^{N+1}h_{i-N}(X) \det(X) & i \geq N\\
0 & \mbox{otherwise,} \end{array}\right.\]
and $\det$ means $e_m$.
By expanding $\Delta_m\overline{\Delta}_m$, and using $\ell(\mu),\ell(\nu) \leq m$,
\[(s_\mu,s_\nu)_{\lambda,m} = \det(\Lambda_{\mu_i-i+1,\nu_j-j+1})_{1\leq i,j \leq m}\]
where $\Lambda_{ij} = \lambda_{j-i}$. If there were no $\lambda''$ term, the answer would be
\begin{equation}
(s_\mu,s_\nu)_{\lambda',m} = \left(\Gamma_+(\overline{X}) s_\mu,s_\nu\right),
\label{lambdaprime}
\end{equation}
using \eqref{gpm}.
However, $\lambda''$ does contribute to the determinant, and the terms have indices
\[\gamma = \left\{(a,b) \big|a \leq m,\ \mu_a-a+1 +N\leq \nu_b-b+1 \right\}\]
%
%
%
%
which lies in the lower-left corner of the matrix.

Given a matrix $A = (a_{ij})$, one can expand its determinant about an entry $(i,j)$ by
\[\det(A) = \det(A') + (-1)^{i+j}a_{ij}\det(A^{(ij)})\]
where $A^{(ij)}$ is the minor obtained by deleting the row $i$ and column $j$,
and $A'$ is the matrix obtained from $A$ by setting $a_{ij}=0$.
Expanding about each entry in $\gamma$,
\[(s_\mu,s_\nu)_{\lambda,m} = \sum_{S \subset \gamma} \left(\prod_k \lambda''_{j_k-i_k}\right)
\left(\psi^*_{i_1}\cdots \psi^*_{i_c} s_\mu,
\psi^*_{j_1}\cdots \psi^*_{j_c} s_\nu\right)_{\lambda',m} ,\]
where we have used $\psi_k^*$ to delete entries from the matrix with the appropriate sign,
\[S = \{(a_1,b_1),...,(a_c,b_c)\},\quad (i_k,j_k) = (\mu_{a_k}-a_k+1,\ \nu_{b_k}-b_k+1),\]
and we are using $\Lambda$ and $\Lambda^{\infty/2}_{-c}$
interchangeably. The inner product does not depend on $c$ because $\lambda'_{j-i}$ is constant along the diagonals $j-i=k$.
Since $\ell(\mu) \leq m$, we have
$a \leq m$ if and only if $\mu_a-a+1 \geq -m+1$. We may therefore rewrite the answer as
\[\sum_{i_k,j_k} \left(\prod_k \lambda''_{j_k-i_k+N}\right)
\left(\psi^*_{i_1-N}\cdots \psi^*_{i_c-N} s_\mu,
\psi^*_{j_1}\cdots \psi^*_{j_c} s_\nu\right)_{\lambda',m} ,\]
over all lists $(i_1,j_1),...,(i_c,j_c)$ such that
\[n+1 \leq i_1 < \cdots < i_c,\quad j_1 < \cdots < j_c.\]

Inserting the definition of $\lambda'$ and $\lambda''$, and using \eqref{lambdaprime}, 
this expression becomes
\[ \sum_{i_k,j_k} (-1)^{c(N+1)}\det(X)^{c} \left(\prod_k h_{j_k-i_k}(X) \right)\]
\[\left(\Gamma_+(\overline{X}) \psi^*_{i_1-N}\cdots \psi^*_{i_c-N} s_\mu, \psi^*_{j_1}\cdots \psi^*_{j_c} s_\nu\right)=\]
\[\sum_{i_k,j_k} (-1)^{c(N+1)} \det(X)^{c} \left(\prod_k h_{j_k-i_k}(X) \right)\]
\[\left(\Gamma_+(\overline{X}) s_\mu, \left(\sum_a (-1)^a e_a(\overline{X})\psi_{i_c+a-N}\right) \cdots\right.\]
\[\left. \left(\sum_a (-1)^a e_a(\overline{X})\psi_{i_1+a-N}\right) \psi^*_{j_1}\cdots \psi^*_{j_c} s_\nu\right)=\]
\[\sum_{i_k,j_k} (-1)^{c} \left(\prod_k h_{j_k-i_k}(X) \right)\]
\[\left(\Gamma_+(\overline{X}) s_\mu, \left(\sum_a (-1)^a e_a(X)\psi_{i_c-a}\right) \cdots\right.\]
\[\left. \left(\sum_a (-1)^a e_a(X)\psi_{i_1-a}\right) \psi^*_{j_1}\cdots \psi^*_{j_c} s_\nu\right)=\]
%
%
\begin{equation}
\label{psistuff}
\sum_{i_k,j_k} Z_{i_1,j_1} \cdots Z_{i_c,j_c}\left(\Gamma_+(\overline{X}) s_\mu,\psi_{i_c} \cdots\psi_{i_1} \psi^*_{j_1}\cdots \psi^*_{j_c} s_\nu\right),
\end{equation}
%
%
where 
\begin{equation}
\label{Z}
Z_{ij} = -\sum_{a\geq n+1-i}(-1)^{a} e_{a}(X) h_{j-i-a}(X).
\end{equation}

After a little work, one finds that the infinite matrix $Z$ has an interesting shape,
\[Z_{ij} = \left\{\begin{array}{rl} -1 & \mbox{if $i=j \geq n+1$} \\ Z'_{ij} & \mbox{if $i \leq n, j \geq n+1$} \\ 0 & \mbox{otherwise.}\end{array}\right.\]
In other words, it is the sum of a nilpotent matrix $Z'_{ij}$ which squares to zero, lying entirely in one quadrant, 
with a diagonal matrix comprised of only the numbers $0$ and $-1$. 
Inserting this gives
%
%
\[\left(\Gamma_+(\overline{X}) s_\mu, \left(\prod_{k \geq n+1} (1-\psi_{k}\psi_{k}^*)\right) \sum_{i_k,j_k} Z'_{i_1,j_1}\cdots Z'_{i_c,j_c}
\psi_{i_1}\psi^*_{j_1} \cdots\psi_{i_c}\psi^*_{j_c} s_\nu\right)=\]
\[\left(\Gamma_+(\overline{X}) s_\mu, \rho(D_n) \exp\left(\rho'(Z')\right) s_\nu \right),\]
We can easily check that
\[D_n \exp(Z')= D_n(1+Z') = g_+^{-1}(X) D_n g_+(X).\]
Substituting back into \eqref{grassexp}, and using \eqref{gpm}, we are left with
\[\left(\Gamma_+(\overline{X})\varphi_{-p_1} \Gamma_+(\overline{X})s_\mu,
\Gamma_+^{-1}(X)\varphi_{-p_1} \pi_n \varphi_{-p_1}\Gamma_+(X) s_\nu\right),\]
which evaluates to the theorem, by some simple commutation relations.
\end{proof}

\section{Moduli spaces of sheaves}\label{sheaves} 
Before stating our main theorem, we recall the characters of the tangent spaces to a fixed point of $\cM_{r,n}$ under the torus action mentioned in the introduction,
so that we may give an explicit description of the inner product \eqref{modexp} using the localization formula.
This was worked out by Ellingsrud and G\"ottsche in \cite{EG}, and by Nakajima using the ADHM description \cite{Nak4}.

Let $\cM_{r,n}$ be the moduli space of framed rank-$r$ torsion-free sheaves $(F,\Phi)$ on $\Pt$ with second Chern class $c_2=n$,
referred to in the introduction. 
A framing $\Phi$ is a choice of an isomorphism
\[ \Phi : F\big|_{\Pp_\infty} \cong \cO_{\Pp_\infty}^{\oplus r}.\]
The two-dimensional torus $T$ acts on $\Pt$ by
\begin{equation}
\label{torus}
(z_1,z_2)\cdot (x_1,x_2,x_3) = (z_1^{-1}x_1,z_2^{-1}x_2,x_3), \quad (z_1,z_2) \in T,
\end{equation}
inducing an action on $\cM_{r,n}$ by pulling back sheaves.
There is a commuting action of the $r$-dimensional torus
\[T^r = \{(w_1,...,w_r)\} \subset GL_r,\]
by left-composition with the framing $\Phi$.
These combine to give an action of $T^2 \times T^r$ on $\cM_{r,n}$ with discrete fixed loci. 
When the rank is one, this moduli space is isomorphic to $\Hilb_n \C^2$, and the fixed points are the zero-dimensional subschemes whose ideals
are generated by monomials,
\[I_\mu = (x^{\mu_1},x^{\mu_2-1}y,...,y^{\ell(\mu)}) \subset R = \C[x,y],\]
indexed by partitions $\mu$ of size $|\mu|=n$. 
In higher rank, the fixed set is indexed by $r$-tuples of partitions $\tilde{\mu} = (\mu^1,...,\mu^r)$ with total degree $n$,
\[ F_{\tilde{\mu}} = I_{\mu^1} \oplus \cdots \oplus I_{\mu^r},\quad \sum_i |\mu^i| = n,\]
thought of as a sheaf on $\Pt$ which is trivial at the line at infinity. The torus $T^2$ acts diagonally on each component
by scaling each monomial $x^iy^j$ by $z_1^iz_2^j$, and $T^r$ acts by scaling by $w_i$ on the $i$th factor,
making $F_{\tilde{\mu}}$ an equivariant sheaf.

By deformation theory and the moduli functor, the tangent space to a sheaf $F$ is well-known to be
\begin{equation}
\label{TMrn}
T_F \cM_{r,n} = \Ext^1(F,F(-\Pp_\infty)).
\end{equation}
Using \c{C}ech cohomology, it turns out that one may compute \eqref{TMrn} using only the
chart $\C^2$. When the rank is one, the character of the tangent space at a fixed point $I_\mu$ is
\begin{equation}
\label{Tmu}
\ch T_\mu \Hilb_n \C^2 = E_{\mu,\mu},
\end{equation}
where
\[E_{\mu,\nu} = \chi(I_\emptyset,I_\emptyset) - \chi(I_\mu,I_\nu),\quad \chi(F,G) = \sum_{i = 0}^2 (-1)^i \ch \Ext^i_{\C^2} (F,G) ,\]
for torus-invariant sheaves $F,G$ on $\C^2$.
The above characters of the infinite-dimensional $\Ext$-groups are elements of $\C((z_1,z_2))$,
but their difference is a Laurent polynomial with nonnegative integer coefficients,
which is the character of a $2n$-dimensional representation of $G$. In general rank $r$, the tangent space \eqref{TMrn} splits
\[\ch T_{\tilde{\mu}} \cM_{r,n} = \chi(R^{\oplus r}, R^{\oplus r}) - \chi(F_{\tilde{\mu}},F_{\tilde{\mu}}) = \sum_{i,j} w_j w_i^{-1} E_{\mu^i,\mu^j},\]
by additivity of $\chi$.

Multiplicativity of $\chi$ produces combinatorial formulas for $E_{\mu,\nu}$,
\[\chi(I_\mu,I_\nu) = z_1^{-1}z_2^{-1} M\overline{V_\mu}V_\nu,\quad M = (1-z_1)(1-z_2),\]
\[V_\mu = \ch I_\mu = M^{-1}-U_\mu,\quad U_\mu = \sum_{\Box \in \mu} z_1^j z_2^i,\]
where a box is an element $(i,j)$ in the Young diagram of $\mu$.
Let us also set
\[V_{\tilde{\mu}} = \ch F_{\tilde{\mu}} = W M^{-1}-U_{\tilde{\mu}},\quad U_{\tilde{\mu}} = \sum_i w_i U_{\mu^i}, \quad W = \sum_i w_i.\]
Simplifying further produces the arm and leg length formula,
\begin{equation}
\label{armleg}
E_{\mu,\nu} = \sum_{\square\in \mu} z_1^{-a_\mu(\square)-1} \, z_2^{l_\nu(\square)}+
\sum_{\square\in \nu} z_1^{a_\nu(\square)}\, z_2^{-l_\mu(\square)-1}.
\end{equation}
where
\[a_\mu(\Box) = \mu_i-j, \quad l_\mu(\Box) = \mu'_j-i,\]
are the arm and leg lengths, which may be negative.
Equation \eqref{armleg} differs from \cite{Nak4} theorem 2.11 because we used the inverted action
\eqref{torus} so that the character of $I_\mu$ would be an expansion in $z_i$ rather than $z_i^{-1}$, and because
we have labeled our Young diagrams so that arm length corresponds to the $x$-coordinate.

By the localization formula combined with Riemann-Roch, the inner product discussed in the introduction is
\begin{equation}
\label{modexpdef}
(f,g)_{\cM_{r,n},k} = \sum_{|\tilde{\mu}| = n} f(\overline{U_{\tilde{\mu}}}) g(U_{\tilde{\mu}})\det(U_{\tilde{\mu}})^k \Omega(\ch T^*_{\tilde{\mu}} M_{r,n}) .
\end{equation}
%
We may now state our main theorem.
\begin{thm}
\label{modthm}
Fix $f,g \in \Lambda$, and $r,n \geq 0$.
Then $(f,g)_{\cM_{r,n},k}$ is the unique element of $\C(z_i,w_j)$ satisfying
\begin{enumerate}
\item[a.] There exists an element $F \in \C(z_i,w_j)[x_i,y_j]$ such that
\[(f,g)_{\cM_{r,n},k} = F(z_i^k,w_j^k)\]
for all $k$.
\item[b.] There exists $k_0$ such that for any $k \geq k_0$
\[(f,g)_{\cM_{r,n},k} = \Tr \varphi_{(1-M)p_1} m_f^* \Gamma_+(W)\pi_n m_{e_n^k} \Gamma_-(z_1z_2\overline{W}) m_g.\]
\end{enumerate}
\end{thm}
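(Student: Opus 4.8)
The plan is to treat the three assertions separately, saving the vertex-operator formula (b) for last since it carries all the content. Conditions (a) and uniqueness I would read directly off the localization formula \eqref{modexpdef}. Each summand there depends on $k$ only through $\det(U_{\tilde\mu})^k$, and $\det(U_{\tilde\mu})$ is a single Laurent monomial $\prod_i z_i^{a_i}\prod_j w_j^{b_j}$; collecting the finitely many summands shows that $(f,g)_{\cM_{r,n},k}$ is a $\C(z_i,w_j)$-linear combination of $k$-th powers of such monomials, which is exactly the shape $F(z_i^k,w_j^k)$ demanded by (a). For uniqueness I would observe that distinct monomials give linearly independent functions of $k$ (a Vandermonde argument in the exponents), so that the collection of values over all $k$ determines $F$, and hence the rational function; condition (b) then fixes these values for $k\ge k_0$, pinning everything down.

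The substance is (b), and here I would realize the moduli expectation as a limit of the Grassmannian inner products \eqref{G}, to which Theorem \ref{grassthm} applies. Fixing the $z_i$ at values of norm less than $1$, I would truncate the $T$-character $WM^{-1}$ of $R^{\oplus r}=\C[x,y]^{\oplus r}$ to a finite-dimensional representation $X$ of dimension $N$ and consider $\Gr_{N-n}(X)$, whose complementary bundle $V'$ has the fixed rank $n$ and restricts to $\cV$ on the locus of genuine fixed points. Concretely, a monomial ideal $F_{\tilde\mu}$ of degree $n$ is a coordinate subspace with $V'=U_{\tilde\mu}$, while the line bundle enters as $\cL^k=e_n(V')^k$, accounting for the insertion $m_{e_n^k}$; the Grassmannian fixed points that are not of ideal type are the discrepancy that the construction must control. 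I would package these data into a one-variable generating function $F(v)=\lim_{N\to\infty}F_N(v)$ whose Taylor coefficients at $v=0$ are Grassmannian expectations on spaces of size growing with $N$, arranged so that $F(1)=(f,g)_{\cM_{r,n},k}$.

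With this scaffolding I would apply Theorem \ref{grassthm} coefficientwise and let $N\to\infty$. Since the tautological rank $m=N-n$ tends to infinity, the projection $\pi_m$ tends to the identity and $X\to WM^{-1}$, so the Grassmannian operator $\Gamma_+(X)\pi_n\Gamma_+^{-1}(X)\varphi_{-p_1}$ should pass in the limit to an operator built from $\Gamma_\pm$ at the arguments occurring in (b). The remaining work is purely symmetric-functional: one converts the factors evaluated on $V'$ into operators on the tautological $V$ by a plethystic twist of the type $f(\overline{V'})=(\varphi_{-p_1}\Gamma_+(\overline X)f)(\overline V)$ already used in Theorem \ref{grassthm}, rewrites $\Gamma_+(WM^{-1})$ and its inverse in terms of $\Gamma_+(W)$ and $\Gamma_-(z_1z_2\overline W)$ using the commutation relations \eqref{GammaCR} and the conjugation formulas \eqref{PhicGamma}, turns the bilinear pairing into the trace against $m_f^*$ and $m_g$, and collects the diagonal twists into the prefactor $\varphi_{(1-M)p_1}$. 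The factor $M=(1-z_1)(1-z_2)$ and the conjugate argument $z_1z_2\overline W$ should emerge here from the character of $R^{\oplus r}$ and its dual.

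The step I expect to be the genuine obstacle is interchanging $\lim_{N\to\infty}$ with evaluation at $v=1$. The individual $F_N$ are rational in $v$ with poles inside the unit disc, so the limit can be computed termwise only for $|v|$ small, where it matches the vertex-operator expression; one cannot naively set $v=1$ under the limit. My resolution would be to prove that the vertex-operator side of (b) defines a meromorphic function of $v$ agreeing with $F(v)$ near the origin which, once $k\ge k_0$, is holomorphic in a disc of radius $1+\epsilon$ about the origin. This is precisely the role of the largeness of $k$: it cancels the singularities at $z_i=0$ that otherwise obstruct continuation up to $v=1$, as noted after the statement. Granting this, the identity $F(1)=(f,g)_{\cM_{r,n},k}$ follows by analytic continuation, and the bulk of the remaining care goes into justifying the termwise limit for small $|v|$ and locating the radius of convergence as a function of $k$.
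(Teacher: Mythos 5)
Your proposal follows the paper's own proof in all essentials: conditions (a) and uniqueness read off the localization formula \eqref{modexpdef}, the truncation of $R^{\oplus r}$ to a finite-dimensional $X$ with $\Gr_{N-n}(X)$ approximating $\cM_{r,n}$ (non-ideal fixed points contributing zero via $\Omega$ of a character with negative constant term), the auxiliary function $F(v)=\lim_N F_N(v)$ whose Taylor coefficients are handled by Theorem \ref{grassthm}, and the analytic continuation from small $|v|$ to $v=1$ using largeness of $k$ to control convergence in a disc of radius $1+\epsilon$. This is the paper's argument, correctly identified including its one genuinely delicate step (the limit interchange), so there is nothing to add beyond confirming that the explicit decomposition $E=E_0+E_1+E_2$ of the tangent character and the Cauchy expansion of $\Omega(v\overline{E}_2)$ are the concrete devices the paper uses to realize the scaffolding you describe.
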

\begin{proof}
The first condition and the uniqueness statement are obvious.

Let $X \subset R^{\oplus r}$ be the subrepresentation of $T^2 \times T^r$ spanned by the monomials of degree at most $N$ in each variable,
whose character is
\[\ch X = \sum_{1 \leq a\leq r} w_a\sum_{1 \leq i,j \leq N} z_1^iz_2^j.\]
Given a torus-invariant subspace $V \subset X$, let
\[E(V) = E_0(V)+E_1(V)+E_2(V),\]
where
\[E_0(V) = \overline{V}(X-V),\quad 
E_1(V) = W z_1^{-1}z_2^{-1} \left(\overline{X}-\overline{V}\right),\]
\[ E_2(V) = (\overline{M}-1)\overline{V}(X-V).\]
Using
\begin{equation}
\label{limX}
\lim_{N \rightarrow \infty} X = WM^{-1},
\end{equation}
it is easy to see that
\begin{equation}
\lim_{N \rightarrow \infty} E(V_{\tilde{\mu}} \cap X) = \overline{M} V_{\emptyset^r} \overline{V}_{\emptyset^r} -\overline{M} V_{\tilde{\mu}} \overline{V}_{\tilde{\mu}} =
T_{\tilde{\mu}} M_{r,n}.
\end{equation}
If $V$ is not the intersection of $X$ with $V_{\tilde{\mu}}$, one can check that the constant term will always be negative,
which implies that $\Omega(E(V))$ is zero.

Now fix $z_i,w_j$ to be numbers with $|z_i| < 1$, but still consider $z_i,w_j,v$ as plethystic variables.
Also fix $f,g,n$, and let $k$ be a number large enough that the following function converges whenever $|v| < 1+\epsilon$ for some $\epsilon > 0$ depending on $|z_i|$:
\begin{equation}
\label{Fv}
F(v) = \lim_{N \rightarrow \infty} \sum_{V} f(\overline{V}')g(V') \det(V')^k
\Omega\left(\overline{E}_0+\overline{E}_1+v\overline{E}_2\right),
\end{equation}
over all $V \subset X$ of codimension $n$. 
By the preceding paragraph, we have
\begin{equation}
\label{F1}
F(1) = (f,g)_{\cM_{r,n},k},
\end{equation}
simply because only the only contributing terms are $V = X \cap V_{\tilde{\mu}}$, and the summands agree as $N$ approaches infinity. Since
\[ (f,g)_{\cM_{r,n},k} = (f, g e_n^k)_{\cM_{r,n},0},\]
we may assume that $g$ is a multiple of $e_n^k$, and set $k=0$ in the inner product.

Now notice that $E_0(V)$ is the character to the tangent space of the Grassmannian at $V$,
\[E_0(V) = \ch T_V \Gr_m X,\quad m = N^2-n.\]
By the Cauchy expansion
\begin{equation}
\label{cauchy}
\Omega\left(fXY\right) = \sum_{\mu} \frac{\varphi_f p_\mu}{\zee(\mu)} p_\mu(X)p_\mu(Y),
\end{equation}
applied to $\Omega(v\overline{E_2})$, the Taylor series of $F(v)$ about zero is given by
\begin{equation}
\label{nonconv}
\lim_{N\rightarrow \infty} \sum_{\mu} \frac{\varphi_{v(M-1)} p_\mu}{\zee(\mu)}
\left( f p_\mu, \left(\varphi_{X-p_1}\Omega(\overline{W} z_1z_2 p_1) g\right)p_\mu\right)_{\Gr_m},
\end{equation}
which converges in a small neighborhood of the origin.
The insertion of $\Omega(\overline{W}z_1z_2p_1)$ comes from $\Omega(\overline{E_1})$, and we have used
\[(\varphi_{X-p_1} g)(V) = g(V').\]
The series is valid only for small values of $v$, because in fact $F_N(v)$ has poles in the unit disc depending on $z_i,f,g,N$. 

For small $v$ we may switch the limits to get
\[\sum_{\mu} \frac{\varphi_{v(M-1)}p_\mu}{\zee(\mu)} \lim_{N \rightarrow \infty} (fp_\mu, \left(\varphi_{-p_1}\Gamma_+(X)\Gamma_-(\overline{W}z_1z_2) g\right) p_\mu)_{\Gr_m}=\]
\begin{equation}
\label{conv}
\sum_{\mu} \frac{\varphi_{v(M-1)}p_\mu}{\zee(\mu)}\left( fp_\mu, 
\Gamma_+(M^{-1}W)\pi_n\Gamma_-(\overline{W}z_1z_2) m_g\Gamma_+^{-1}(M^{-1}W) \varphi_{-p_1}p_\mu\right)
\end{equation}
by theorem \ref{grassthm}, equation \eqref{limX}, and the fact that $\Gamma_+$ is a homomorphism.
This sum actually satisfies the geometric series test for $v$ in some disc of radius $1+\epsilon$, because $M-1$ is first order in $z_i$, which outweighs the $v$ factor for small enough $\epsilon$,
and by some easy bounds on the remaining part of the summand. 

By analytic continuation, equation \eqref{conv} agrees with $F(v)$ for all $v$ in the larger disc.
We recover the theorem by setting $v=1$, and rewriting \eqref{conv} as a trace,
\[\Tr \varphi_{(M-1)p_1} m_f^* 
\Gamma_+(M^{-1}W)\pi_n\Gamma_-(\overline{W}z_1z_2) m_g\Gamma_+^{-1}(M^{-1}W) \varphi_{-p_1}=\]
\[\Tr \varphi_{(1-M)p_1} \Gamma_+((M-1)M^{-1}W) m_f^* 
\Gamma_+(M^{-1}W)\pi_n\Gamma_-(\overline{W}z_1z_2) m_g = \]
\[\Tr \varphi_{(1-M)p_1} m_f^* 
\Gamma_+(W)\pi_n\Gamma_-(\overline{W}z_1z_2) m_g.\]
\end{proof}
\section{Examples}
We conclude the paper with two simple examples, leaving the applications mentioned in the introduction for future papers.

\emph{Example.} If $r=0$, theorem \ref{modthm} says that the inner product is uniquely determined by
\[(f,g)_{\cM_{0,n},k} = Tr \varphi_{(1-M)p_1} m_f^* \pi_n m_{e_n^k} m_g,\]
for large $k$. The right-hand-side is not always zero, but it is for large $k$, for degree reasons.
we have therefore recovered the vacuous statement that
the left-hand-side is zero for all $k$, being a sum with no terms.

\emph{Example.} When the rank is one, we lose nothing by setting $w_1=1$. Since $U_\mu$ always has a constant term of of one,
we find that
\[\Omega(-U_\mu) = \Omega(-\overline{U_\mu}) = 0,\]
and therefore, the inner-product must vanish if $f$ or $g$ is a multiple of $\Omega(-p_1)$. The first case is clear from the theorem,
\[(f\Omega(-p_1),g)_{\Hilb_n,k} = Tr \varphi_{(1-M)p_1} m_f^* \pi_n m_{e_n^k} \Gamma_-(z_1z_2) m_g\]
for large $k$. Again, this vanishes for large $k$ for degree reasons, since we have canceled the first vertex operator.
The case when $g$ is a multiple of $\Omega(-p_1)$ does not appear to be transparent.

\emph{Example.} When $f=g=1$, the inner product is the massless $K$-theoretic partition function $Z_n$ studied by Nakajima in \cite{Nak5}.
It turns out that $k_0 = 0$ is sufficiently large for conditional convergence. The partition function stabilizes as for large $n$ to
\[Z_\infty = \lim_{n\rightarrow \infty} Z_n = \Tr \varphi_{(1-M)p_1}\Gamma_+(W) \Gamma_-(\overline{W}z_1z_2),\]
by the theorem.
For any $f,g \in \Lambda$, and $h$ a rational function in some variables, it is easy to show that
\[\Tr \varphi_{(1-h^{-1})p_1} m^*_f m_g = \sum_{\mu} \frac{\varphi_{(1-h^{-1})}(p_\mu)}{\zee(\mu)} (f p_\mu,g p_\mu) =C(f,g)_{h},\]
where
\[\quad C = \sum_{\mu} \varphi_{(1-h^{-1})} (p_\mu),\]
and the inner product is defined by
\[(p_\mu,p_\nu)_{h} = \delta_{\mu,\nu}\zee(\mu) \varphi_{h} (p_\mu).\]
In particular, $(f,g)_{M^{-1}}$ is a different normalization of the Macdonald inner product which appears in Haiman's theory, which we
would expect to appear in rank one.

The final answer is
\[C^{-1} Z_\infty = (\Gamma_-(W)1,\Gamma_-(z_1z_2 \overline{W})1)_{M^{-1}} = \]
\[(1,\Gamma_+(WM^{-1})\Gamma_-(\overline{W}z_1z_2)1)_{M^{-1}} = \Omega(W \overline{W}M^{-1}z_1z_2),\]
where
\[C = \sum_{\mu} \varphi_{(1-M)} (p_\mu).\]

\end{document}